\newtheorem{theorem}{Theorem}[section]
\newtheorem{proposition}[theorem]{Proposition}
\newtheorem{definition}[theorem]{Definition}
\newtheorem{lemma}[theorem]{Lemma}
\theoremstyle{remark}
\newtheorem{remark}[theorem]{Remark}
\def\vec{\mathbf}
\def\<{\langle}
\def\>{\rangle}
\begin{document}
\title{\bf{Closed strong spacelike curves, Fenchel theorem and Plateau problem in the 3-dimensional Minkowski space  }}

\author{Nan Ye%
  \thanks{School of Mathematical Sciences, Peking University, Beijing 100871, People's Republic of China. \texttt{yen@pku.edu.cn}. }
\and Xiang Ma%
  \thanks{LMAM, School of Mathematical Sciences, Peking University,
 Beijing 100871, People's Republic of China. \texttt{maxiang@math.pku.edu.cn}, Fax:+86-010-62751801. Corresponding author. Supported by NSFC 11471021.}
  }

\date{\today}
\maketitle

\begin{center}
{\bf Abstract}
\end{center}

We generalize the Fenchel theorem for strong spacelike closed curves of index $1$ in the 3-dimensional Minkowski space, showing that the total curvature must be less than or equal to $2\pi$. Here strong spacelike  means that the tangent vector and the curvature vector span a spacelike 2-plane at each point of the curve $\gamma$ under consideration. The assumption of index 1 is equivalent to saying that $\gamma$ winds around some timelike axis with winding number 1. We prove this reversed Fenchel-type inequality by constructing a ruled spacelike surface with the given curve as boundary and applying the Gauss-Bonnet formula. As a by-product, this shows the existence of a maximal surface with $\gamma$ as boundary.

\hspace{2mm}

{\bf Keywords:}  strong spacelike curves, Fenchel theorem, total curvature, maximal surface, Plateau problem, Gauss-Bonnet formula \\

{\bf MSC(2000):\hspace{2mm} 52A40, 53C42, 53C50}

\section{Introduction}

To study the global properties of closed curves, an interesting idea is to associate a specific surface $M$ with the given curve $\gamma$ as boundary. Then we can control the geometry of $\gamma$ by the information of $M$, and vice versa.

As an illustration, let us consider a closed smooth space curve $\gamma$ in $\mathbb{R}^n$ ($n\ge 3$) which is assumed to bound a minimal disk $M$. The Gauss-Bonnet formula says
\begin{equation}\label{eq-gaussbonnet}
\int_{M}K\mathrm{d}M
+\int_{\partial M}\kappa_g \mathrm{d}s=2\pi,
\end{equation}
where $K$ is the Gauss curvature of $M$, $\mathrm{d}M$ is the area element with respect to the induced metric, $\kappa_g$ is the geodesic curvature of the curve $\partial M=\gamma\subset M$, and $s$ is the arc-length parameter. For such a Euclidean minimal surface it is well-known that $K\le 0$. There also holds $\kappa(p)\ge \kappa_g(p)=\kappa(p)\cdot\cos\theta_p$ at any $p\in \gamma$, where $\theta_p$ is the angle between the tangent plane of $M$ and the osculating plane of $\gamma$ at $p$. Combining with these two facts, we immediately obtain the conclusion of the Fenchel theorem \cite{Fenchel}:
\[\int_\gamma \kappa\mathrm{d}s\geq 2\pi.\]
The equality is attained exactly when $M$ is flat and $\gamma$ is a convex plane curve.

Notice that the total curvature gives a quantitative measure of the complexity of the space curve $\gamma$. A natural expectation is that when $\int_\gamma \kappa\mathrm{d}s$ is small, $\gamma$ should be simple, as confirmed by the Fary-Milnor theorem \cite{Milnor} that when $n=3$ and $\int_\gamma \kappa\mathrm{d}s\le 4\pi$, $\gamma$ is always a trivial knot.

The next reasonable guess is that the solution $M$ of the corresponding Plateau problem should also be nice under similar conditions. Consider  a minimal surface $M\subset\mathbb{R}^n$ of arbitrary topological type with boundary $\gamma=\partial M$. If $\int_\gamma \kappa\mathrm{d}s\le 4\pi$,
a remarkable theorem obtained in 2002 by Tobias Ekholm, Brian White and Daniel Wienholtz \cite{Ekholm} asserts that $M$ must be smoothly embedded.

In this paper, we are motivated to consider a similar picture, namely a closed spacelike curve $\gamma$ as the boundary of a spacelike (maximal) surface in the 3-dimensional Minkowski space $\mathbb{R}^3_1$. We would like to find some appropriate assumptions on $\gamma$ to guarantee that the Plateau problem has a solution. It is also desirable to give a prior estimation of the total curvature $\int_\gamma \kappa\mathrm{d}s$ (i.e., a generalization of the Fenchel theorem and/or the Fary-Milnor theorem). We achieve these goals successfully.
To state our result, let us introduce two definitions.

\begin{definition}\label{def-strong}
A curve $\gamma\subset \mathbb{R}^3_1$ is called spacelike if at any point the tangent vector is spacelike, i.e. $\langle \gamma',\gamma'\rangle>0$. It is called \emph{strong spacelike } if its (unit) tangent vector $T=\gamma'(s)$ and the curvature vector $\kappa N=\gamma''(s)$ span a spacelike 2-plane at each point. In other words, the osculating plane at any point of $\gamma$ is of rank-2 with a positive-definite inner product induced from $\mathbb{R}^3_1$.
\end{definition}

Note that in the 3-dimensional Minkowski space, there exist neither closed timelike curves, nor closed spacelike curves with timelike normals. In contrast, the strong spacelike condition allows the length and curvature to be defined directly as before and admits closed examples. To avoid misunderstanding, we point out that a strong spacelike curve does not allow inflection points where the curvature vector is a zero vector. So we can assume $\kappa>0$.

\begin{definition}\label{def-ind}
The \emph{index} of a closed spacelike curve in $\mathbb{R}^3_1$ is defined to be the winding number $I$ of the tangent indicatrix (the image of the unit tangent vector $T$) around the de Sitter sphere
$\mathbb{S}^2_1=\{X\in \mathbb{R}^3_1|\langle X,X\rangle=1\}$ (the usual one-sheet hyperboloid, which is homotopy equivalent to a circle). This index $I$ is integer-valued and always assumed to be positive.
\end{definition}

\begin{remark}
Note that this definition is independent to the choice of the timelike direction, hence well-defined. In contrast, for a closed curve in $\mathbb{R}^3$ generally there is not a well-defined notion of index or winding number unless it is a plane curve. In the special case that $I=1$, the closed curve winds around some timelike axis exactly for one cycle.
\end{remark}

Now we can state out generalization of the Fenchel theorem in the 3-dimensional Minkowski space. This seems to be a new result to the best of our knowledge.

\begin{theorem}[\textbf{Main Result 1; the Fenchel theorem in $\mathbb{R}^3_1$}]
\label{thm-fenchel}
Let $\gamma$ be a closed strong spacelike curve in $\mathbb{R}^3_1$ with index $1$. Then the total curvature $\int_\gamma k\mathrm{d}s\leq 2\pi$. The equality holds if and only if it is a convex curve on a spacelike plane.
\end{theorem}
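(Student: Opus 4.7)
The plan is to follow the abstract's hint: attach a spacelike ruled disk $M$ to $\gamma$ and exploit two sign reversals from the Euclidean case, combined with the Gauss--Bonnet formula. Namely, in the Minkowski setting a spacelike surface built from appropriately chosen rulings has nonnegative Gauss curvature $K\ge 0$, and along a strong spacelike boundary curve the geodesic curvature dominates the curvature, $\kappa_g\ge\kappa$, rather than the other way around.

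First I would establish the comparison $\kappa_g\ge\kappa$. Let $M\subset\R^3_1$ be any oriented spacelike surface with timelike unit normal $\vec{n}$ and $\partial M=\gamma$, and let $\vec{e}\in T_pM$ denote the spacelike unit inward conormal at $p\in\gamma$. Decomposing $\gamma''=\kappa_g\vec{e}+h_n\vec{n}$, and using that $\vec{N}$ is spacelike (the strong spacelike hypothesis) while $\vec{n}$ is timelike, inner products yield
\[\kappa^2=\langle\gamma'',\gamma''\rangle=\kappa_g^2-h_n^2,\]
so $\kappa_g\ge\kappa>0$ along $\gamma$, with equality iff the osculating plane of $\gamma$ at $p$ coincides with $T_pM$. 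The induced metric on such an $M$ is Riemannian, so for $M$ a topological disk the ordinary Gauss--Bonnet formula gives
\[\int_M K\,\d M+\oint_\gamma\kappa_g\,\d s=2\pi,\]
reducing the theorem to constructing a spacelike disk bounded by $\gamma$ with $K\ge 0$.

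The heart of the argument is this construction, where the index-$1$ hypothesis becomes essential. Up to a Lorentz transformation one may assume the tangent indicatrix winds once around the timelike $x_3$-axis; then $\gamma$ can be parametrized by an azimuthal angle $\theta$, and a topological disk can be swept out by a one-parameter family of spacelike line segments emanating from $\gamma(\theta)$. The rulings will be chosen in the Lorentzian $2$-plane $\mathrm{span}(\vec{N},\vec{B})$ of signature $(1,1)$ in such a way that each ruling is spacelike and the tangent plane of the resulting $M$ is positive definite at every interior point. The Gauss equation for such a spacelike ruled surface in $\R^3_1$ then forces $K\ge 0$---a sign flip from the Euclidean fact that ruled surfaces satisfy $K\le 0$---which is the algebraic mechanism behind the reversed Fenchel inequality. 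Chaining the estimates yields
\[\int_\gamma\kappa\,\d s\le\int_\gamma\kappa_g\,\d s=2\pi-\int_M K\,\d M\le 2\pi.\]

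For the equality case, $\int_\gamma\kappa\,\d s=2\pi$ forces both $K\equiv 0$ on $M$ and $\kappa_g\equiv\kappa$ along $\gamma$; the latter says the osculating plane of $\gamma$ everywhere coincides with $T_pM$, and combined with the flatness of $M$ this forces $M$ to lie in a single affine spacelike plane where $\gamma$ then becomes a simple closed convex planar curve in the classical Euclidean sense. The main obstacle in executing this plan is the construction step: in Euclidean space one could simply invoke solvability of the Plateau problem together with the known sign $K\le 0$ for minimal surfaces, but in $\R^3_1$ no analogous existence theorem is available a priori, so the ruled disk must be built by hand while simultaneously controlling the spacelikeness of its tangent planes and the nonnegativity of $K$. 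It is exactly in this construction that the strong spacelike and index-$1$ hypotheses on $\gamma$ interact most delicately.
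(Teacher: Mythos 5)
Your overall strategy is the paper's: attach a spacelike ruled disk $M$ to $\gamma$, use $K\ge 0$ for spacelike ruled surfaces and $\kappa_g\ge\kappa$ along a strong spacelike boundary, and conclude by Gauss--Bonnet. The comparison step and the final chain of inequalities are essentially correct (one caveat: your identity $\kappa^2=\kappa_g^2-h_n^2$ only gives $|\kappa_g|\ge\kappa$; to use $\int\kappa\le\int\kappa_g$ you still need $\kappa_g>0$ with respect to the inward conormal, which in the paper follows because $M$ is a graph over a convex domain and $\gamma$ projects to a strictly convex curve). But the genuine gap is that the construction of the spacelike ruled disk --- which you yourself identify as ``the main obstacle'' --- is exactly where the entire content of the paper's proof lies, and your sketch of it would not work as stated. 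Choosing rulings ``emanating from $\gamma(\theta)$'' inside the Lorentzian plane $\mathrm{span}(\vec{N},\vec{B})$ does not explain how the segments assemble into a disk whose full boundary is $\gamma$ (where do the rulings end?), nor why the resulting tangent planes are positive definite at interior and, crucially, at boundary points; spacelikeness of individual rulings does not imply spacelikeness of the surface.

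The paper resolves this with a concrete chord construction: split $\gamma$ into two arcs $\gamma_0,\gamma_1$ of equal length $L$ and set $X(s,t)=(1-t)\gamma_0(s)+t\gamma_1(s)$, so every ruling is a chord with both endpoints on $\gamma$ and $\partial M=\gamma$ automatically. Spacelikeness is then not ``controlled by hand'' but derived from two lemmas that encode precisely how the strong spacelike and index-$1$ hypotheses enter: the Projection Lemma (the projection of $\gamma$ to any spacelike or lightlike plane is a strictly convex Jordan curve, proved from $\cosh^2\phi\,\theta'^2-\phi'^2>0$ and the monotonicity of $\theta$) and the Section Lemma (every chord of $\gamma$ is spacelike, any three points of $\gamma$ span a spacelike plane, and a chord together with the tangent line at its endpoint spans a spacelike plane). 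Spacelikeness at interior points follows from convexity of the future lightcone applied to $(1-t)\,T_0\times\vec{v}+t\,T_1\times\vec{v}$, at generic boundary points from part (3) of the Section Lemma, and at the two division points $p,q$ from a Taylor expansion showing the tangent plane there is the (spacelike) osculating plane of $\gamma$ --- a degeneracy your sketch does not anticipate. Without supplying arguments of this kind, your proposal reduces the theorem to an existence statement it does not prove, so as it stands it is an outline of the paper's approach rather than a proof.
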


\begin{remark}
The reversed inequality might look peculiar when compared with the Euclidean case. This can be explained as below. The total curvature of $\gamma$ is equivalent to the length of the tangent indicatrix $T(\gamma)$. If we consider $\gamma$ as a small perturbation of a closed convex plane curve, then the corresponding variation of $T(\gamma)$ in $\mathbb{S}^2_1$ is always along the \emph{timelike} co-normal direction and vibrates up and down along the equator, which makes the length $L(T(\gamma))$ less than the original length $2\pi$ of the equator.
On the other hand, although a line of altitude $\Gamma$ may have length greater than $2\pi$, it can not be realized as the tangent indicatrix $T(\gamma)$ of a closed strong spacelike curve $\gamma$, hence not a counterexample to our claim. This is because $\Gamma$ always lies in a half space, and after integration one gets a curve $\gamma$ whose height function (with respect to a fixed timelike direction) increases monotonically, thus can not be closed.
\end{remark}

To prove this reversed Fenchel inequality, we adopt the same idea that constructing a surface $M$ with $\partial M=\gamma$. Can we take $M$ to be a spacelike surface with vanishing mean curvature (called \emph{maximal surface})? A known criterion for the existence of solutions to this Plateau problem is as below.

\begin{theorem}\label{thm-Bartnik-Simon}\cite{Bartnik-Simon, Flaherty}
Given a compact, codimension two spacelike submanifold $\gamma^{n-2}$ in $\mathbb{R}^n_1$ without boundary. Suppose there is a spacelike hypersurface $M^{n-1}$ with $\partial M^{n-1}=\gamma^{n-2}$ and $M^{n-1}$ is the graph of a $C^2$ function $u$ defined over a compact domain $\Omega$ of spacelike subspace $\mathbb{R}^{n-1}$ whose gradient is uniformly bounded, $|Du|<\delta<1$.
Then there exists a spacelike maximal hypersurface $\overline{M}^{n-1}$ with $\partial \overline{M}^{n-1}=\gamma^{n-2}$ and $\overline{M}^{n-1}$ is also a graph over the same $\Omega$.
\end{theorem}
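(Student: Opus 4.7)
\medskip

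\noindent\textbf{Proof proposal.} The plan is to recast the existence problem as a Dirichlet problem for a quasilinear elliptic PDE, exploit the concavity of the Lorentzian area functional, and use the given spacelike graph $u$ as a barrier to obtain a priori gradient estimates that preserve spacelikeness. Choosing coordinates so that $\Omega$ sits in a spacelike hyperplane $\R^{n-1}\subset\R^n_1$, vanishing Lorentzian mean curvature for a graph $x_n=\bar u(x')$ is equivalent to the maximal graph equation
\[
\operatorname{div}\!\left(\frac{D\bar u}{\sqrt{1-|D\bar u|^2}}\right)=0\ \ \text{in }\Omega,\qquad \bar u\big|_{\partial\Omega}=u\big|_{\partial\Omega},
\]
and the spacelike condition reads exactly $|D\bar u|<1$. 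The associated Lorentzian area functional $\mathcal{A}(v):=\int_\Omega\sqrt{1-|Dv|^2}\,\d x$ is strictly concave on the convex admissible set $\mathcal{C}=\{v\in W^{1,\infty}(\Omega): |Dv|\le 1,\ v|_{\partial\Omega}=u|_{\partial\Omega}\}$, and its maximizers over $\mathcal{C}$ are weak solutions of the PDE.

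First I would produce a candidate by direct variation: since $u\in\mathcal{C}$ the admissible set is nonempty, $\mathcal{C}$ is weak-$\ast$ compact in $W^{1,\infty}(\Omega)$, and $\mathcal{A}$ is upper semicontinuous there, so a maximizer $\bar u$ exists as a Lipschitz function with $|D\bar u|\le 1$ almost everywhere. Equivalently one may run the method of continuity along a one-parameter family joining $u$ at $t=0$ to a maximal graph at $t=1$; openness comes from the implicit function theorem on the spacelike open set, and closedness reduces to the a priori estimates described below. In either approach the remaining task is to upgrade the weak object to a classical $C^{2,\alpha}$ solution enjoying the strict pointwise bound $|D\bar u|<1$.

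The heart of the proof is the a priori gradient estimate. For a smooth maximal graph the lapse $v:=(1-|D\bar u|^2)^{-1/2}=-\langle\nu,\partial_n\rangle$ of the future unit normal against the parallel timelike field $\partial_n$ satisfies a Simons-type identity $\Delta_{\bar M}v=|A|^2 v\ge 0$ with respect to the induced Riemannian metric on the graph, so the maximum principle reduces the interior estimate of $|D\bar u|$ to the boundary values of $v$, localizing the problem at $\partial\Omega$. The boundary estimate itself is produced by comparison with explicit spacelike barriers manufactured from the hypothesis $|Du|<\delta<1$: the strict margin $1-\delta$ provides a uniformly spacelike collar around $\gamma$, in which upper and lower barriers obtained by small vertical translations and convex tilting of $u$ enclose $\bar u$ and force $|D\bar u|\le \delta'<1$ on $\partial\Omega$. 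Combined with the interior bound, this yields a uniform estimate $|D\bar u|\le \delta''<1$ throughout $\Omega$, which makes the equation uniformly elliptic along $\bar u$; standard Schauder theory then promotes $\bar u$ to $C^{2,\alpha}(\bar\Omega)$ and bootstraps to smoothness, completing the existence.

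The principal obstacle is the boundary gradient estimate. Because the coefficients of the maximal graph equation blow up precisely as $|D\bar u|\to 1$, without quantitative control near $\partial\Omega$ one cannot exclude that a variational or continuity-method solution becomes null on approach to $\gamma$—a failure mode that genuinely occurs when no spacelike barrier is available. The hypothesis $|Du|<\delta<1$ is exactly what rules this out, and engineering suitable spacelike barriers adapted to the geometry of $\gamma$ and $\Omega$, together with the Simons-type identity for $v$ underlying the interior bound, constitutes the main technical content of the Bartnik--Simon and Flaherty theorems we invoke.
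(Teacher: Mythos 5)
The paper does not prove this statement: Theorem~\ref{thm-Bartnik-Simon} is quoted from Bartnik--Simon and Flaherty and used as a black box, so there is no in-paper argument to compare yours against. Judged on its own terms, your outline correctly reconstructs the strategy of the cited proofs: reduction to the Dirichlet problem for $\operatorname{div}\bigl(D\bar u/\sqrt{1-|D\bar u|^2}\bigr)=0$, existence of a Lipschitz maximizer of the concave Lorentzian area functional on the weak-$\ast$ compact admissible set, the interior gradient estimate via the identity $\Delta_{\bar M}v=|A|^2v$ for the tilt $v=-\langle\nu,\partial_n\rangle$, a boundary gradient estimate exploiting the uniformly spacelike extension $u$, and Schauder regularity plus bootstrap. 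The one place where your sketch is materially thinner than the actual argument is the boundary estimate: vertical translates of $u$ do not have the correct boundary values, so they are not barriers in the naive comparison-principle sense, and ``convex tilting'' is not yet a construction. What Bartnik and Simon actually establish is that a maximizer of the concave functional whose boundary data admits a uniformly spacelike ($|Du|<\delta<1$) extension cannot develop null segments and inherits a strict gradient bound up to $\partial\Omega$; this boundary regularity step is the genuinely delicate part, and your proposal correctly locates it but does not supply it. Since the theorem enters the paper only by citation, your text should be read as an accurate road map to the external proof rather than a self-contained replacement for it.
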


Thanks to this criterion, we need only to show the existence of a spacelike surface spanning $\gamma$. Fortunately, the assumption of $\gamma$ being strong spacelike with index $1$ ensures that $\gamma$ has a simple shape: its projection to a spacelike plane $\mathbb{R}^2$ must be a convex plane curve bounding a compact convex domain $\Omega$ (see Lemma~\ref{lem-project}); and any three points on $\gamma$ span a spacelike plane (see Lemma~\ref{lem-section}). This enables us to show the existence of such a surface $M$ by explicit construction.

\begin{proposition}\label{prop-ruled}
Let $\gamma$ be a closed strong spacelike $C^2$ (twice continuously differentiable) curve in $\mathbb{R}^3_1$ of index $1$. Then there exists a surface $M$, $\partial M=\gamma$, with the following properties:
\begin{enumerate}
\item $M$ is a \emph{ruled surface};
\item $M$ is a graph over a convex domain $\Omega$ on a spacelike plane, hence a topological disk;
\item $M$ is $C^2$-smooth and spacelike (including the boundary points);
\item $M$ has non-negative Gauss curvature.
\end{enumerate}
\end{proposition}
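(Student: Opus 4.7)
The plan is an explicit construction. By Lemma~\ref{lem-project} the orthogonal projection $\pi\colon\mathbb{R}^3_1\to\Pi$ onto a chosen spacelike plane $\Pi$ sends $\gamma$ diffeomorphically onto a $C^2$ strictly convex Jordan curve $\partial\Omega$ bounding a compact convex region $\Omega\subset\Pi$. Splitting $\mathbb{R}^3_1=\Pi\oplus\mathbb{R}e_0$ for a unit timelike $e_0$, I represent $\gamma=\{(p,h(p)):p\in\partial\Omega\}$ with $h\in C^2(\partial\Omega)$. It then suffices to construct a $C^2$ extension $u\colon\overline\Omega\to\mathbb{R}$ of $h$ whose graph is ruled and spacelike with $K\ge 0$.

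I will take $u$ to be the parallel-chord interpolation: fix a direction $v\in\Pi$, and for each $c\in(c_{\min},c_{\max})$ let $[a(c),b(c)]=\Omega\cap\{\langle\cdot,v\rangle=c\}$ be the level chord, setting $u$ on this chord to be the affine function in the $v^\perp$-direction matching $h$ at $a(c),b(c)$. Properties (1) (ruled) and (2) (graph over $\Omega$) are built in. The spacelike half of (3) is the place where Lemma~\ref{lem-section} enters: every ruling is a line segment between two points of $\gamma$, hence lies in a spacelike plane, giving $|u_{v^\perp}|<1$ on each chord; and any interior tangent plane of $M$ is spanned by three nearby points of $\gamma$, hence spacelike by Lemma~\ref{lem-section}, so $|\nabla u|<1$ on all of $\overline\Omega$. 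Property (4) is automatic from the Lorentzian Gauss equation for a spacelike ruled surface: the component of the second fundamental form in the ruling direction vanishes, and because the unit normal is timelike ($\langle N,N\rangle=-1$) the Gauss equation yields $K=M_{12}^{\,2}/(EG-F^2)\ge 0$, the sign opposite to the familiar Euclidean ruled-surface formula.

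The main obstacle is $C^2$-regularity of $u$ up to the boundary at the two extreme points $P_{\min},P_{\max}\in\partial\Omega$ in direction $v$, where the chord shrinks to a point with length vanishing like $\sqrt{c-c_{\min}}$; interior regularity is routine. At $P_{\min}$, rewrite
\[
u(x,c)=\tfrac12\bigl(h(a(c))+h(b(c))\bigr)+\frac{\langle x-\mathrm{mid}(c),v^\perp\rangle}{|b(c)-a(c)|}\bigl(h(b(c))-h(a(c))\bigr),
\]
and observe that the local involution of $\partial\Omega$ swapping the two chord endpoints acts, in a suitable coordinate $\tau$ with $\tau^2=c-c_{\min}$, as $\tau\mapsto-\tau$ up to higher order; hence $h(a(c))+h(b(c))$ is \emph{even} in $\tau$ and $h(b(c))-h(a(c))$ is \emph{odd}, and the ratio in the second summand is again even. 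Expanding both terms in powers of $\xi=\tau^2$, together with strict convexity of $\partial\Omega$ at $P_{\min}$ and the $C^2$ Taylor data of $h$, gives $u\in C^2$ across $P_{\min}$; the argument at $P_{\max}$ is symmetric, completing the proof.
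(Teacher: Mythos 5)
Your construction (rulings over the parallel chords of $\Omega$ in a fixed direction $v$) differs from the paper's (which pairs points of $\gamma$ at equal arclength from two antipodal-in-length points $p,q$ and rules between the two resulting arcs), but the decisive step is the same in either scheme and your treatment of it has a genuine gap. The claim that ``any interior tangent plane of $M$ is spanned by three nearby points of $\gamma$, hence spacelike by Lemma~\ref{lem-section}'' is not correct: at an interior point of a ruling, the tangent plane is spanned by the chord direction $b_\gamma(c)-a_\gamma(c)$ together with the convex combination $(1-\lambda)a_\gamma'(c)+\lambda b_\gamma'(c)$ of the two endpoint tangents; this is not a plane through three points of $\gamma$, and even if one exhibits it as a limit of secant planes, a limit of spacelike planes may degenerate to a lightlike one, so at best you would get $|\nabla u|\le 1$, not the strict inequality needed for spacelikeness (and immersedness). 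The paper closes exactly this gap by a separate argument: along a ruling the normal is $\partial_s X\times\partial_t X=(1-t)\,[T_0\times\vec{v}]+t\,[T_1\times\vec{v}]$, the Projection Lemma~\ref{lem-project} (convexity of the projected curve forces $T_0$ and $T_1$ to project to the same side of the chord) guarantees that both $T_0\times\vec{v}$ and $T_1\times\vec{v}$ are future-directed timelike, and convexity of the open future light cone then makes every interior normal timelike. Some such orientation-plus-cone argument is indispensable in your parallel-chord construction as well, and it is missing; Lemma~\ref{lem-section} alone only controls the boundary rulings ($\lambda=0,1$), which is how the paper uses it.

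The second weak point is the $C^2$ statement at the degenerate chords $P_{\min},P_{\max}$. With $\gamma$ only $C^2$ you cannot ``expand both terms in powers of $\xi=\tau^2$'' beyond first order; the endpoint-swapping involution is $\tau\mapsto-\tau$ only up to higher-order corrections, so the even/odd splitting of $h(a(c))+h(b(c))$ and $h(b(c))-h(a(c))$ is not exact; and an even $C^2$ function of $\tau$ is in general only $C^1$ as a function of $\xi=\tau^2$. The paper meets the same degeneration at its two collapse points $p,q$, but its equal-arclength pairing makes the cancellation exact there: $T_0(0)=-T_1(0)$, $N_0(0)=N_1(0)$ and equal curvature give $X(s,\tfrac12)=p+\kappa(0)N_0(0)\tfrac{s^2}{2}+o(s^2)$, so $u=s^2$ is a regular parameter and the tangent plane at $p$ is the (spacelike) osculating plane. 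Your pairing by level sets of a linear functional lacks this exact symmetry, so the regularity and spacelikeness of $M$ at $P_{\min},P_{\max}$ need a genuine proof (or a reduction to the equal-arclength pairing). The remaining items are fine: the graph property is built into your construction, and your Gauss-equation argument for $K\ge 0$ is a correct replacement for the paper's citation of the ruled-surface fact.
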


Compared with the Euclidean case, ruled surfaces in $\mathbb{R}^3_1$ are still saddle shaped, yet with non-negative Gauss curvature ($K\ge 0$) \cite{Izumiya}; and at the boundary $\gamma$, $\kappa_g(p)=\kappa(p)\cosh(\theta_p)\ge \kappa(p)$ where $\theta_p\in \mathbb{R}$ is the so-called hyperbolic angle between the tangent plane of $M$ and the osculating plane of $\gamma$ at $p$. Applying the Gauss-Bonnet formula to $M$ and using the similar argument as in $\mathbb{R}^3$, we obtain the generalized Fenchel theorem~\ref{thm-fenchel} immediately.

Moreover, based on Theorem~\ref{thm-Bartnik-Simon}, we can now confirm the existence of a solution to the Plateau problem.

\begin{theorem}[\textbf{Main Result 2}]\label{thm-maximal}
Let $\gamma$ be a closed, strong spacelike curve in $\mathbb{R}^3_1$ with index $I=1$. Then there exists a maximal surface $\overline{M}$ with $\partial \overline{M}=\gamma$. This $\overline{M}$ is a graph over a compact, convex domain $\Omega\subset \mathbb{R}^2$, thus itself is an embedded topological disk.
\end{theorem}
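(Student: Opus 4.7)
The plan is to reduce Theorem~\ref{thm-maximal} directly to the existence criterion provided by Theorem~\ref{thm-Bartnik-Simon}, using the ruled surface supplied by Proposition~\ref{prop-ruled} as the trial spacelike graph. In short, the construction is ``ruled spacelike surface + Bartnik--Simon $\Longrightarrow$ maximal surface''.

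First, I would apply Proposition~\ref{prop-ruled} to obtain a ruled $C^2$ spacelike surface $M$ with $\partial M = \gamma$. By item (2) of that proposition, $M$ is the graph of a $C^2$ function $u\colon \overline{\Omega} \to \mathbb{R}$ over a compact convex region $\Omega$ lying in some spacelike plane $\mathbb{R}^2\subset \mathbb{R}^3_1$. This is precisely the topological/regularity setup demanded by Theorem~\ref{thm-Bartnik-Simon} with $n=3$.

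Second, I need to verify the uniform gradient bound $|Du|<\delta<1$ required by Theorem~\ref{thm-Bartnik-Simon}. Fixing Euclidean coordinates adapted to the spacelike plane so that $\langle\cdot,\cdot\rangle$ takes the form $\mathrm{d}x_1^2+\mathrm{d}x_2^2-\mathrm{d}x_3^2$, the graph of $u$ is spacelike at $x\in\overline{\Omega}$ if and only if $|Du(x)|<1$. By item (3) of Proposition~\ref{prop-ruled}, $M$ is spacelike at every point including those lying on the boundary $\gamma$, so $|Du(x)|<1$ pointwise on the compact set $\overline{\Omega}$. Since $u\in C^2(\overline{\Omega})$, the continuous function $|Du|$ attains its maximum on $\overline{\Omega}$, and this maximum is some value $\delta<1$. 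Applying Theorem~\ref{thm-Bartnik-Simon} then yields a spacelike maximal surface $\overline{M}$ which is a graph over the same $\Omega$ and satisfies $\partial\overline{M}=\gamma$; being a graph over a compact convex domain, $\overline{M}$ is automatically an embedded topological disk.

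The genuinely hard step is not in this final assembly but in Proposition~\ref{prop-ruled}, where the ruled surface is constructed and proved to remain spacelike up to and including the boundary. Once that proposition is in hand, the present theorem follows essentially in one line by invoking Bartnik--Simon, so I do not anticipate any additional technical obstacle here beyond ensuring that the spacelike/$C^2$ regularity claims in Proposition~\ref{prop-ruled} are used in their uniform, up-to-boundary form.
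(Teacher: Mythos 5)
Your proposal is correct and follows essentially the same route as the paper: the authors likewise construct the ruled spacelike graph of Proposition~\ref{prop-ruled} and invoke Theorem~\ref{thm-Bartnik-Simon} to obtain the maximal graph over the same convex domain $\Omega$. Your explicit compactness argument for the uniform bound $|Du|\le\delta<1$ is a detail the paper leaves implicit, but it is exactly the intended justification.
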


We believe that such an $\overline{M}$ should be unique. The proof might resemble that of the classical Rad\'{o}'s theorem \cite{Rado2} as below.

\begin{theorem}\label{thm-rado}\cite{Rado2}
If $\gamma\subset \mathbb{R}^n$ has a one-to-one projection onto the boundary of a convex
planar region $\Omega$, then any minimal disk bounded by $\gamma$ is the graph of a smooth
function over $\Omega$. In particular, it is smoothly embedded. If in addition $n=3$, then there is only one disk and there are no minimal varieties of other topological types.
\end{theorem}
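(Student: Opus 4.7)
The approach is to reduce Rad\'o's theorem to the classical Kneser--Choquet--Rad\'o theorem on harmonic maps of the disk onto a convex planar region. Parametrize the minimal disk $M$ conformally as $F:\overline{D^2}\to\mathbb{R}^n$. Because $F$ is a conformal minimal immersion, every coordinate of $F$ is harmonic on $D^2$, and more generally $\varphi\circ F$ is harmonic for every linear functional $\varphi$ on $\mathbb{R}^n$. Post-composition with the projection $\pi:\mathbb{R}^n\to\mathbb{R}^2$ yields a planar harmonic map $h:=\pi\circ F:\overline{D^2}\to\mathbb{R}^2$ whose boundary trace, by the hypothesis on $\gamma$, is a monotone homeomorphism onto the convex Jordan curve $\partial\Omega$.

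Two ingredients then deliver the graph conclusion. First, for any line $L\subset\mathbb{R}^2$ disjoint from $\overline\Omega$, the hyperplane $\pi^{-1}(L)\subset\mathbb{R}^n$ is the zero set of some linear functional $\varphi_L$, and $\varphi_L\circ F$ is a harmonic function not vanishing on $\partial D^2$; the maximum principle then forces it to have constant sign on $D^2$, so $M$ avoids $\pi^{-1}(L)$. Letting $L$ range over all exterior lines gives $\pi(M)\subset\overline\Omega$, while the strong maximum principle applied to supporting lines shows that each supporting hyperplane meets $M$ only in the corresponding boundary point of $\gamma$. Second, I would invoke the Kneser--Choquet--Rad\'o theorem: a harmonic map $h:\overline{D^2}\to\mathbb{R}^2$ whose boundary values monotonically parametrize the boundary of a bounded convex region is automatically a diffeomorphism of $D^2$ onto that region. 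Applied to our $h$, this makes $\pi|_M$ a diffeomorphism of $M$ onto $\overline\Omega$, identifying $M$ with the graph of a smooth function $u$ on $\Omega$ solving the minimal surface equation, and confirming that $M$ is smoothly embedded.

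For the sharper statement when $n=3$, uniqueness of the disk follows from the comparison principle for the quasilinear minimal surface equation: two minimal graphs over the convex domain $\Omega$ with identical boundary values must coincide. To rule out minimal varieties of other topological type, the same exterior-hyperplane argument, now in the form of the maximum principle for stationary integral varifolds, confines any such variety to the vertical slab $\pi^{-1}(\overline\Omega)$; the convex hull property together with Allard regularity then force it to coincide with the unique minimal graph already found. I expect the main obstacle to be the Kneser--Choquet--Rad\'o step, which genuinely depends on the convexity of $\Omega$: a direct maximum principle only yields the containment $\pi(M)\subset\overline\Omega$, and ruling out fold points of $h$ requires analyzing the Jacobian $|f'|^2-|g'|^2$ of the decomposition $h=f+\bar g$ into holomorphic and antiholomorphic parts, where convexity of $\partial\Omega$ is used in an essential way.
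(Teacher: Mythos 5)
First, a point of comparison: the paper does not prove this statement at all --- Theorem~\ref{thm-rado} is quoted from Rad\'o's book \cite{Rado2} purely as background motivation for the conjectured uniqueness of the maximal surface in Theorem~\ref{thm-maximal}. So your proposal can only be measured against the classical proof, not against anything in the paper. For the main assertions your route is indeed the classical one and is essentially correct: parametrize the minimal disk conformally, observe that the coordinates (hence the horizontal projection $h=\pi\circ F$) are harmonic, note that the boundary trace of $h$ is a monotone parametrization of the convex curve $\partial\Omega$ because the boundary map onto $\gamma$ is monotone and $\pi|_\gamma$ is injective, and invoke the Rad\'o--Kneser--Choquet theorem (with Lewy's theorem giving the nonvanishing Jacobian in the interior) to conclude that $h$ is a diffeomorphism of the open disk onto $\Omega$; this exhibits $M$ as a graph, kills any would-be branch points, and gives embeddedness. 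Uniqueness among disks for $n=3$ then follows, as you say, from the comparison principle for the minimal surface equation applied to two graphs with the same boundary data (convexity of $\Omega$ is not even needed there). Your ``first ingredient'' (exterior vertical hyperplanes and the maximum principle) is correct but is only the convex hull property and is not actually needed once Rad\'o--Kneser--Choquet is applied.

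The genuine gap is in the last claim, that there are no minimal varieties of other topological types. Confinement to the solid cylinder $\pi^{-1}(\overline{\Omega})$ plus ``the convex hull property together with Allard regularity'' does not force a stationary variety to coincide with the graph: nothing in that sentence excludes, say, a higher-genus or non-graphical competitor sitting inside the cylinder, and Allard regularity is a local statement that cannot see topology. The standard ways to close this are: (i) Rad\'o's original critical-point count --- if a vertical plane $\pi^{-1}(L)$ were tangent to the variety at an interior point, the harmonic function $\varphi_L\circ F$ would have an interior critical point and its nodal set would reach the boundary in at least four points, whereas $L$ meets the convex curve $\partial\Omega$ in at most two points and $\pi|_\gamma$ is injective; a Poincar\'e--Hopf/Morse count of such tangencies then forces Euler characteristic $1$; or (ii) a sweeping argument --- the vertical translates $\mathrm{graph}(u+t)$ of the minimal graph already constructed foliate the cylinder by minimal surfaces, and the strong maximum principle for stationary varifolds (Solomon--White/Ilmanen) applied at the extremal touching parameter $t$ forces any connected minimal variety with boundary $\gamma\subset\mathrm{graph}(u)$ to lie in, hence equal, $\mathrm{graph}(u)$. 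Either of these replaces your final paragraph; as written, that step is an assertion rather than a proof.
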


\begin{remark}
Another candidate for a saddle-shaped surface $\tilde{M}$ is the graph of a harmonic function on the complex plane. Let $\Omega\subset\mathbb{R}^2$ be the compact convex domain bounded by the projection image of $\gamma$. Precisely speaking, we need to solve the Dirichlet problem for a harmonic function $u$ defined over $\Omega$, with the boundary value assigned by the height function of $\gamma$ (i.e., $\gamma=\{(t,u(t))|t\in\partial\Omega\}$). The existence of a solution $u$ is no doubt. Yet the non-trivial part is to guarantee that the graph $\tilde{M}=\{(z,u(z))|z\in \Omega\}$ is spacelike, i.e., to prove a gradient estimate $|Du|<1$. This depends on the boundary value determined by $\gamma$ and the assumption that $\gamma$ is strong spacelike. It seems that we can prove the desired estimation when $\Omega$ is a circular disk. But this method is not successful in the general case.
\end{remark}

We would like to mention that we have found several different proofs to the reversed Fenchel inequality (Theorem~\ref{thm-fenchel}). One of them uses a generalized Crofton formula \cite{MaYeWang}. We are also considering a generalization to higher dimensional spacelike submanifolds, and the results will appear elsewhere.\\

\textbf{Acknowledgement.}~
We would like to thank Donghao Wang for helpful discussions.

\section{Basic properties of strong spacelike closed curve of index $1$}

The 3-dimensional Minkowski space $\mathbb{R}^3_1$ is endowed with a Lorentz inner product, expressed in a canonical coordinate system as below:
\[\langle X,Y\rangle=x_1y_1+x_2y_2-x_3y_3, ~~~X=(x_1,x_2,x_3), Y=(y_1,y_2,y_3).\]
A vector $X$ is called spacelike (lightlike, timelike) if $\langle X,X\rangle>0 (=0,<0)$, respectively. A timelike vector $X=(x_1,x_2,x_3)$ is called \emph{future-directed} (\emph{past-directed}) if $x_3>0$ ($x_3<0$).

A 2-dimensional subspace $V$ is called spacelike (lightlike, timelike) if the Lorentz inner product restricts to be a positive definite (degenerate, Lorentz) quadratic form on $V$. It is the orthogonal complement of a nonzero vector $\vec{n}$ which is timelike (lightlike, spacelike) respectively. In particular we can define the \emph{cross product} of two spacelike vectors on a spacelike plane as below, which is always orthogonal to $X$ and $Y$ (when it is nonzero we obtain a timelike normal vector):
\begin{equation}
X\times Y=(-x_2 y_3+x_3 y_2, -x_3 y_1+x_1 y_3, x_1 y_2-x_2 y_1).
\end{equation}
A curve or a surface in $\mathbb{R}^3_1$ is said to be spacelike (lightlike, timelike) if its tangent space at each point is so.

The strong spacelike condition for a curve $\gamma$ is defined in the introduction (Definition~\ref{def-strong}). A basic way to visualize the corresponding geometric shape of $\gamma$ is projecting $\gamma$ to a spacelike or lightlike subspace $\Sigma\subset\mathbb{R}^3_1$.

\begin{definition}\label{def-project}
When $\Sigma$ is spacelike with unit timelike normal vector $\vec{n}$, the projection of any vector $\vec{v}\in \mathbb{R}^3_1$ to $\Sigma$ is \[
\sigma(\vec{v})\triangleq\vec{v}+\langle\vec{v},\vec{n}\rangle\vec{n}.
\]
When $\Sigma$ is a lightlike plane with lightlike normal $\vec{n}$, one should take care that $\vec{n}\in \Sigma$. Take an arbitrary lightlike vector $\vec{n}^\star$ so that $\langle \vec{n},\vec{n}^\star\rangle=1$. Note that $\vec{n}^\star$ is transversal to $\Sigma$, and it is not unique.
The projection of any vector $\vec{v}\in \mathbb{R}^3_1$ to $\Sigma$ is
defined as below which depends on both $\vec{n}$ and $\vec{n}^\star$:
\begin{equation}\label{eq-project}
\sigma(\vec{v})\triangleq
\vec{v}-\langle\vec{v},\vec{n}\rangle\vec{n}^\star.
\end{equation}
\end{definition}

\begin{lemma}\label{lem-project}[\textbf{The Projection Lemma}]\\
Let $\gamma$ be a closed strong spacelike curve in $\mathbb{R}^3_1$ with $I=1$.
Let $\sigma$ be the projection map to a spacelike or lightlike plane $\Sigma$ in $\mathbb{R}^3_1$.  Then $\sigma(\gamma)$ is a strictly convex Jordan curve on $\Sigma$, and $\sigma$ is a one-to-one correspondence.
\end{lemma}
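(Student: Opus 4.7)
The plan is to reduce to a classical fact from plane-curve theory: a regular closed curve in an affine plane whose tangent direction rotates strictly monotonically with total rotation $\pm 2\pi$ is automatically a simple closed strictly convex Jordan curve. I will verify these two analytic conditions for the projected curve $\tilde\gamma:=\sigma(\gamma)\subset\Sigma$ and then read off the one-to-one correspondence from its embeddedness.

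First, in either case of Definition~\ref{def-project} the kernel $\ker\sigma$ is the one-dimensional line spanned by the timelike $\vec{n}$ (spacelike case) or by the lightlike $\vec{n}^\star$ (lightlike case); in particular $\ker\sigma$ contains no nonzero spacelike vector. I would then use this twice. Since the tangent $T(s)$ is spacelike, $T(s)\notin\ker\sigma$ and $\sigma(T(s))\ne 0$, giving regularity of $\tilde\gamma$. More strongly, by the strong spacelike hypothesis the osculating plane $\mathrm{span}(T,T')$ is a rank-$2$ spacelike plane, so it meets $\ker\sigma$ only in $\{0\}$; equivalently, $\sigma$ restricted to the osculating plane is a linear isomorphism onto $\Sigma$. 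Hence $\sigma(T)$ and $\sigma(T')$ remain linearly independent in $\Sigma$, which is precisely the statement that $\tilde\gamma$ has nowhere-vanishing (affine) signed curvature, so its tangent direction rotates strictly monotonically.

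Next I would identify the total rotation of that tangent direction. By definition $T:\gamma\to\mathbb{S}^2_1$ is a loop of winding number $1$; if the normalized projection $\mathbb{S}^2_1\to\Sigma\setminus\{0\}\to S^1$ (using any auxiliary Euclidean structure on $\Sigma$) has degree $d$, then the tangent indicatrix of $\tilde\gamma$ winds exactly $d$ times around the origin of $\Sigma$. A direct computation in a canonical basis---taking $\vec{n}=e_3$ in the spacelike case and $\vec{n}=e_2+e_3$ in the lightlike case---on the equatorial generator $\alpha\mapsto(\cos\alpha,\sin\alpha,0)$ of $\pi_1(\mathbb{S}^2_1)$ shows that $d=\pm 1$.

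With strictly monotonic tangent direction and total rotation $\pm 2\pi$, the classical fact yields that $\tilde\gamma$ is a strictly convex Jordan curve in $\Sigma$; its embeddedness then upgrades $\sigma|_\gamma$ to a bijection onto $\tilde\gamma$. The main obstacle I expect is the degree computation in the lightlike case, since the Lorentz form on $\Sigma$ is degenerate and there is no canonical notion of angle on $\Sigma$; one has to work in affine coordinates and invoke that ``winds exactly once about a point'' is an affine-invariant statement.
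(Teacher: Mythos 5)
Your argument is correct, and it reaches the same structural conclusion as the paper (reduce to the classical fact that a closed regular plane curve whose tangent direction turns strictly monotonically through exactly $\pm 2\pi$ is a strictly convex Jordan curve), but the way you verify the hypotheses is genuinely different. The paper works in the explicit latitude--longitude parametrization \eqref{eq-tangent} of the tangent indicatrix: in the spacelike case the projected tangent direction is literally $(\cos\theta,\sin\theta)$, and monotonicity plus total turning $2\pi$ come straight from \eqref{eq-strong} and the index hypothesis; in the lightlike case the paper computes $\det(\tilde T,\tilde T')$ in affine coordinates and controls its sign by a Cauchy--Schwarz estimate against \eqref{eq-strong}. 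You instead get nonvanishing signed curvature coordinate-freely: $\ker\sigma$ is a timelike line (spacelike $\Sigma$) or the lightlike line $\mathbb{R}\vec{n}^\star$ (lightlike $\Sigma$), so it meets the spacelike osculating plane only at the origin, whence $\sigma$ is an isomorphism on that plane and $\sigma(T),\sigma(T')$ stay independent; and you get the total turning $\pm 2\pi$ by a degree argument, pushing the winding number $1$ of $T$ in $\mathbb{S}^2_1$ forward under $\sigma:\mathbb{S}^2_1\to\Sigma\setminus\{0\}$ and computing the degree on the equator. What your route buys is a uniform treatment of the spacelike and lightlike cases with no Cauchy--Schwarz computation, and it makes explicit the rotation-index step that the paper leaves implicit in the lightlike case (there the paper only exhibits a fixed sign of the curvature). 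What the paper's computation buys is self-containedness at the level of elementary calculus, with no appeal to $\pi_1$ or homotopy invariance. Two small points you should make explicit in a final write-up: the reduction of a general $\Sigma$ (and a general admissible $\vec{n}^\star$) to your canonical choices, e.g.\ by a Lorentz transformation and by connectedness of the set of admissible $\vec{n}^\star$, so the degree computation covers all cases; and the remark that injectivity of the composed map $s\mapsto\sigma(\gamma(s))$ is what yields the one-to-one correspondence claimed in Lemma~\ref{lem-project}.
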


\begin{proof}
Let $s$ be the arclength parameter of $\gamma$. In the canonical coordinate system, the tangent vector of $\gamma$ can be expressed in terms of the longitude and latitude parameters $\theta,\phi$:
\begin{equation}\label{eq-tangent}
T(s)=(\cosh\phi(s)\cos\theta(s),\cosh\phi(s)\sin\theta(s),
\sinh\phi(s)).
\end{equation}
The strong spacelike and $I=1$ assumption implies
\begin{equation}\label{eq-strong}
\cosh^2\phi\cdot \theta'(s)^2-\phi'(s)^2>0,
\end{equation}
and $\theta(s)$ ranges from $0$ to $2\pi$ monotonically.

When $\Sigma$ is a spacelike plane, without loss of generality we may take its normal vector $\vec{n}=(0,0,1)$. It follows from \eqref{eq-tangent} that the projection $\sigma(\gamma)$ has tangent vector
\begin{equation}
\frac{d}{ds}(\sigma(\gamma))
=\sigma(T)=(\cosh\phi(s)\cos\theta(s),\cosh\phi(s)\sin\theta(s)).
\end{equation}
So the tangent direction of $\sigma(\gamma)$ is the same as $(\cos\theta(s),\sin\theta(s))$, which rotates in a strictly monotonic manner with range $[0,2\pi]$. Thus $\sigma(\gamma)$ must still be a closed and strictly convex curve on $\Sigma$. The 1-1 correspondence property is clear.

When $\Sigma$ is a lightlike plane, without loss of generality we suppose it is orthogonal to $\vec{n}=(0,1,1)$ and transverse to $\vec{n}^\star=(0,\frac{1}{2},-\frac{1}{2})$. Let $\vec{e}_1=(1,0,0)$. By Definition~\ref{def-project} and \eqref{eq-project}, the projection image $\sigma(\gamma)$ has tangent vector
\begin{equation}
\frac{d}{ds}(\sigma(\gamma))=\sigma(T)
=\cosh\phi(s)\cos\theta(s)\vec{e}_1
+\frac{\cosh\phi(s)\sin\theta(s)+\sinh\phi(s)}{2}\vec{n}.
\end{equation}
The curvature being positive or not is an affine invariant property. So we need only to identify $\sigma(T)$ with the tuple $\tilde{T}(s)=(\cosh\phi(s)\cos\theta(s),\cosh\phi(s)\sin\theta(s)
+\sinh\phi(s))$ and to show that
\[
\det(\tilde{T},\tilde{T}')=(\theta'\cosh^2\phi+\phi'\cos\theta
+\theta'\cosh\phi\sinh\phi\cdot\sin\theta)
\] has a fixed sign. Indeed this can be shown using the Cauch-Schwarz inequality and the strong spacelike property \eqref{eq-strong} as below:
\begin{equation}
|\phi'\cos\theta+\theta'\sinh\phi\cosh\phi\sin\theta|
\leq\sqrt{\phi'^2+\theta'^2\sinh^2\phi\cosh^2\phi}
<\theta'\cosh^2\phi.
\end{equation}
The proof is thus completed.
\end{proof}

\begin{lemma}\label{lem-section}[\textbf{The Section Lemma}]\\
Assumptions as above. Let $p_1,p_2,p_3$ be arbitrary chosen distinct points on $\gamma$. Then
\begin{enumerate}
\item The line segment $\overline{p_1p_2}$ connecting $p_1,p_2$ is spacelike.
\item $p_1,p_2,p_3$ span a spacelike plane.
\item The chord $p_1p_2$ and the tangent line at $p_1$ span a spacelike plane.
\end{enumerate}
\end{lemma}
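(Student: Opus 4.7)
The plan is to prove all three parts by contradiction, invoking the Projection Lemma (Lemma~\ref{lem-project}) as the sole main ingredient. If the line or plane in question is not spacelike, I will produce a spacelike or lightlike projection $\sigma$ whose kernel lies inside that object; then either the injectivity of $\sigma$ on $\gamma$ or the strict convexity of the projected curve $\sigma(\gamma)$ will fail.

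For (1), if $p_2-p_1$ is timelike I take $\vec{m}\propto p_2-p_1$ with $\langle\vec{m},\vec{m}\rangle=-1$ and spacelike-project onto $\vec{m}^{\perp}$: the kernel of $\sigma$ is $\mathbb{R}\vec{m}$, so $\sigma(p_1)=\sigma(p_2)$, violating the bijectivity statement of Lemma~\ref{lem-project}. If $p_2-p_1$ is null I instead put $\vec{n}^{\star}=p_2-p_1$ and use a lightlike projection with any null $\vec{n}$ satisfying $\langle\vec{n},\vec{n}^{\star}\rangle=1$ (such $\vec{n}$ exists because two non-proportional null vectors in $\mathbb{R}^3_1$ necessarily have nonzero Minkowski inner product, via Cauchy--Schwarz on the first two coordinates). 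The same collapsing $\sigma(p_1)=\sigma(p_2)$ produces the contradiction.

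For (2), if the plane $\Pi=\mathrm{aff}(p_1,p_2,p_3)$ is timelike, then its spacelike normal $\vec{n}_\Pi$ has a Minkowski orthogonal complement of signature $(1,-1)$, so I may pick a unit timelike $\vec{m}$ in the direction space of $\Pi$ and spacelike-project along it. Since $\mathbb{R}\vec{m}$ is contained in the direction space of $\Pi$, the map $\sigma$ collapses $\Pi$ to an affine line in $\vec{m}^{\perp}$; thus $\sigma(p_1),\sigma(p_2),\sigma(p_3)$ become three distinct collinear points of the strictly convex Jordan curve $\sigma(\gamma)$, which is impossible because any line meets a strictly convex closed curve in at most two points. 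If $\Pi$ is lightlike, its null normal $\vec{n}_\Pi$ lies \emph{inside} $\Pi$, so setting $\vec{n}^{\star}=\vec{n}_\Pi$ for a lightlike projection executes the identical argument.

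Part (3) uses the same projection trick on the 2-plane $\Pi'$ spanned by $T(s_1)$ and $p_2-p_1$. The extra ingredient is the identity $\sigma(T(s))=\tfrac{d}{ds}\sigma(\gamma(s))$ already established inside the proof of Lemma~\ref{lem-project}: in particular $\sigma(T(s_1))$ is the tangent vector of $\sigma(\gamma)$ at $\sigma(p_1)$. If $\Pi'$ were not spacelike, $\sigma(\Pi')$ would again be an affine line, forcing $\sigma(T(s_1))\parallel\sigma(p_2)-\sigma(p_1)$; then the tangent line of the strictly convex curve $\sigma(\gamma)$ at $\sigma(p_1)$ passes through a second curve point $\sigma(p_2)$, contradicting strict convexity. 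The main technical point throughout is verifying in each case that the kernel direction can be chosen \emph{inside} the offending line or plane. This reduces to two routine observations of Lorentz geometry---timelike planes contain timelike vectors, and lightlike planes contain their own null normal---and this is precisely where the ambient signature genuinely enters the argument.
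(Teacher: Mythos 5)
Your proposal is correct and follows essentially the same route as the paper: argue by contradiction, choose a spacelike (or lightlike) projection whose kernel lies inside the offending non-spacelike line or plane, and contradict the injectivity and strict convexity guaranteed by Lemma~\ref{lem-project}. The only cosmetic differences are that you prove (1) directly via injectivity of $\sigma$ (the paper deduces it as a corollary of (2)) and you spell out the lightlike case more explicitly by taking $\vec{n}^{\star}$ to be the null normal inside the plane, which the paper only sketches.
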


\begin{proof}
We prove (2) by contradiction first, and (1) follows as a corollary. Suppose that $\{p_1,p_2,p_3\}$ are contained in a timelike plane (this includes the collinear case). Then we can always find a spacelike plane $\Sigma$ so that the orthogonal projection of the plane is a line on $\Sigma$ . In particular, the projection of $\{p_1,p_2,p_3\}$ are collinear. On the other hand, the conclusion of Lemma~\ref{lem-project} says that these three points on a convex Jordan curve should always be distinct and not lying on a line. This is a contradiction. The situation that $\{p_1,p_2,p_3\}$ span a lightlike plane can be ruled out in a similar way (if only one be careful about the definition of the projection map).

For conclusion (3), suppose otherwise that $\overline{p_1p_2}$ and the tangent line at $p_1$ is contained in a timelike (or lightlike) plane. We can still choose a spacelike plane $\Sigma$ orthogonal to the timelike plane (or transversal to the lightlike plane); then the projection of $\overline{p_1p_2}$ and the tangent line at $p_1$ are collinear, which contradicts with Lemma~\ref{lem-project}, again.
\end{proof}

\section{The ruled spacelike surface spanning $\gamma$}

As we pointed out in the introduction, the proof of the two main results can be reduced to showing the existence of a saddle-shaped (hence $K\ge 0$) spacelike surface $M$ in $\mathbb{R}^3_1$ with the given boundary.
We tried several different saddle-shaped surfaces (like the graph of a harmonic function, or a minimal surface in $\mathbb{R}^3$ identified with this $\mathbb{R}^3_1$), with the same difficulty: How to show the constructed surface to be really \emph{spacelike} in this ambient Minkowski space?

After many unsuccessful attempts, we noticed that the Section Lemma~\ref{lem-section} seems to be helpful: it guarantees that we can take finitely many points $\{p_i\}$ on $\gamma$ and construct a polyhedron with spacelike triangular faces, which  would serve as an approximation to the expected \emph{smooth} spacelike surface. The obvious problem with this idea is that when we take more and more of the vertices $\{p_i\}$, we will not get a nice refinement of the triangulation in the usual sense; instead we would obtain something like a union of slim rulers. We were stuck here for some time until one day, the inspiration came that the limit shape might still exist as a smooth surface, which must be a ruled surface. Realizing that ruled surface is automatically saddle-shaped, the rest thing to do was then clear:
\begin{description}
  \item[Step 1:] Construct a ruled surface $M$ directly with $\partial M=\gamma$.
  \item[Step 2:] Show that $M$ is differentiable (using a suitable parametrization).
  \item[Step 3:] Show that $M$ is immersed and spacelike (with the help of the Section Lemma~\ref{lem-section}).
\end{description}

The rest of this section is devoted to the proof of Proposition~\ref{prop-ruled} following these three steps.\\

As a preparation, fix the canonical coordinate system $(x_1,x_2,x_3)$ in $\mathbb{R}^3_1$.
Choose the coordinate plane $Ox_1x_2$ as the target spacelike plane $\Sigma$ to make projection. \\

\textbf{Step 1}: First we give
an explicit construction of the ruled surface $M$ together with its parametrization.
Take two points $p$ and $q$ on $\gamma$ which divide $\gamma$ into two arcs with equal length $L$. Denote these two arcs as $\gamma_0(s),\gamma_1(s)$ with arc-length parameter $s\in [0,L]$ and tangent vectors $\gamma_0'(s)=T_0(s),\gamma_1'(s)=T_1(s)$, respectively. The ruled surface $X(s,t)$ is given as below:
\begin{equation}\label{eq-X}
X(s,t)=(1-t)\gamma_0(s)+t\gamma_1(s)
=X(s,\frac{1}{2})+\Big(t-\frac{1}{2}\Big)\vec{v}(s),
\end{equation}
\[
\text{where}~~X(s,\frac{1}{2})=\frac{1}{2}[\gamma_0(s)+\gamma_1(s)],~
\vec{v}(s)\triangleq\gamma_1(s)-\gamma_0(s),~~~
s\in[0,L],t\in[0,1].
\]

\textbf{Step 2}: $X(s,\frac{1}{2})$ and
$\vec{v}(s)$ are obviously $C^2$ curves because $\gamma$ is so. We have
\begin{eqnarray}
\frac{\partial X}{\partial t}&=&
\gamma_1(s)-\gamma_0(s)
=\vec{v}(s),\label{eq-X1}\\
\frac{\partial X}{\partial s}&=&
(1-t)T_0(s)+tT_1(s).\label{eq-X2}
\end{eqnarray}
In particular, $X(s,t)$ is a $C^2$ ruled surface even on a larger
domain $s\in [0,L], t\in (-\epsilon, 1+\epsilon)$ for some $\epsilon>0$.

\textbf{Step 3}: Finally we shall verify that $X(s,t)$ is a spacelike surface including the boundary points.

At the end point $p$ with $s=0$ we shall use the expansion
\[
\gamma_0(s)=p+T_0(0) s+ \kappa(0)N_0(0) \frac{s^2}{2} +o(s^2),~~
\gamma_1(s)=p+T_1(0) s+ \kappa(0)N_1(0) \frac{s^2}{2} +o(s^2),
\]
where $T_0(0)=-T_1(0)$ is the tangent vector of $\gamma$ at $p$,
$N_0(0)=N_1(0)$ is the normal vector, and $\kappa(0)>0$ is the curvature
at the same point. This implies that $u=s^2$ is a regular parameter
for the curve
\[
X(s,\frac{1}{2})=p+\kappa(0)N_0(0) \frac{s^2}{2}+o(s^2)
= p+\kappa(0)N_0(0) \frac{u}{2}+o(u)
\]
with tangent vector $ \frac{\kappa(0)N_0(0)}{2}\ne \vec{0}$. Thus
the tangent plane of $X(s,t)$ at $s=0$ is clearly
spanned by $N_0(0),T_0(0)$, hence it is the spacelike osculating plane
of the strong spacelike $\gamma$ at $p$.

At a generic boundary point, say $s\in (0,L)$ and $t=0$, the tangent plane spanned by $T_0(s),\vec{v}(s)$
is spacelike according to the conclusion (3) of the Section Lemma~\ref{lem-section}.
The argument is the same when $s\in (0,L), t=1$.

Moreover, we can choose the orientation suitably so that
$T_0(s)\times\vec{v}(s)$ and $T_1(s)\times\vec{v}(s)$ are both future-directed
timelike normal vector, because Lemma~\ref{lem-project} tells us
that after projection to $Ox_1x_2$-plane, $T_0(s),T_1(s)$ will point to
the same side of $\vec{v}(s)$. Since all future-directed
timelike vectors form the positive lightcone which is convex, we know
$(1-t)T_0(s)\times\vec{v}(s)+tT_1(s)\times\vec{v}(s)$ is also a future-directed timelike vector. This is nothing else but exactly the normal vector of
$X(s,t)$ at an interior point with $s\in(0,L)$ and $t\in(0,1)$, because
by \eqref{eq-X1} and \eqref{eq-X2} one finds
\[
\frac{\partial X}{\partial s}\times \frac{\partial X}{\partial t}=
(1-t)[T_0(s)\times\vec{v}(s)]+t[T_1(s)\times\vec{v}(s)].
\]
Thus $X(s,t)$ is spacelike and immersed at any interior point.

Finally, it is easy to see that a spacelike ruled surface must have non-negative
Gauss curvature in the Minkowski space (for example, see \cite{Izumiya}).
This finishes the proof to Proposition~\ref{prop-ruled}, and establishes Theorem~\ref{thm-fenchel}, Theorem~\ref{thm-maximal}.

\end{document}